\def\correspondingauthor{\footnote{Corresponding author.E-mail addresses: umesh@maths.iitkgp.ac.in
(U.C.Gupta),nitinkumar7276@gmail.com (Nitin
Kumar),faridaparvezb@gmail.com (F. P. Barbhuiya)}}
\newtheorem{theorem}{Theorem}
\theoremstyle{definition}
\begin{document}
\title{A queueing system with batch renewal input and negative arrivals}
%
%

%
\author{U. C. Gupta\correspondingauthor{},~Nitin Kumar, ~F. P. Barbhuiya
\\{\it Department of Mathematics, Indian Institute of Technology Kharagpur,}\\
{\it Kharagpur-721302, India}}

\maketitle{} \textbf{Abstract:} This paper studies an infinite
buffer single server queueing model with exponentially distributed
service times and negative arrivals. The ordinary (positive)
customers arrive in batches of random size according to renewal
arrival process, and joins the queue/server for service. The
negative arrivals are characterized by two independent Poisson
arrival processes, a negative customer which removes the positive
customer undergoing service, if any, and a disaster which makes the
system empty by simultaneously removing all the positive customers
present in the system. Using the supplementary variable technique
and difference equation method we obtain explicit formulae for the
steady-state distribution of the number of positive customers in the
system at pre-arrival and arbitrary epochs. Moreover, we discuss the
results of some special models with or without negative arrivals
along with their stability conditions. The results obtained
throughout the analysis are computationally tractable as illustrated
by few numerical examples. Furthermore, we discuss the impact of the
negative arrivals on the performance of the system by means of some
graphical
representations.\\
\textbf{Keywords:} Batch arrival, Difference equation, Disasters,
RCH, Renewal process, Negative customers
\section{Introduction}\label{sec1}
Since the pioneering work of Gelenbe \cite{gelenbe1989random} in the
year 1989, queueing models with negative arrivals (also termed as
G-networks) have gained considerable attention. A negative arrival
causes the removal of one or more ordinary customer (also called
positive customer) from the system, and prevents it from getting
served. In the literature, negative arrivals are generally
introduced by the name of `negative customers' and/or `disasters'.
The arrival of a negative customer removes one ordinary customer
from the system, according to a definite killing strategy i.e., RCH
(Removal of Customer at the Head) or RCE (Removal of Customer at the
End). Under RCH killing discipline, the customer who is undergoing
service gets removed, while in case of RCE, the customer at the end
of the queue is eliminated. Meanwhile, the occurrence of a disaster
simultaneously removes all the present customers in the system thus
making the system idle. Disasters are also known by the terms
catastrophic events (Barbhuiya et al. \cite{barbhuiya2019batch}),
mass exodus (Chen and Renshaw \cite{chen1997m}) or queue flushing
(Towsley and Tripathi \cite{towsley1991single}). Both, a negative
customer and a disaster have no impact on the system when it is
empty. For further references on different queueing models with
negative arrivals the readers may refer to the bibliography by Van
Do \cite{van2011bibliography}.
\par Initially, $M/M/1$ queueing model with positive and negative
customers was studied by Harrison and Pitel
\cite{harrison1993sojourn}. They derived the Laplace transforms of
the sojourn time density under both RCH and RCE killing discipline.
They further extended their work to $M/G/1$ queue with negative
arrivals and obtained the generating function of the queue length
probability distribution (see Harrison and Pitel
\cite{harrison1995m,harrison1996m}). Jain and Sigman
\cite{jain1996pollaczek} derived a Pollaczek-Khintchine formula for
an $M/G/1$ queue with disasters using preemptive LIFO discipline,
whereas Boxma et al. \cite{boxma2001clearing} considered the same
model by assuming the disasters to occur in deterministic
equidistant times or at random times. The $M/M/1$ queue with
negative arrivals was first extended to the $GI/M/1$ queue by Yang
and Chae \cite{yang2001note}, assuming the occurrence of negative
customers (under RCE killing discipline) and disasters. Meanwhile,
Abbas and A{\"\i}ssani \cite{abbas2010strong} investigated the
strong stability conditions of the embedded Markov chain for
$GI/M/1$ queue with negative customers. A discrete-time $GI/G/1$
queue with negative arrivals was considered by Zhou
\cite{zhou2005performance} where he derived the probability
generating function of actual service time of ordinary customers.
Recently, Chakravarthy \cite{chakravarthy2017catastrophic}
investigated a single server catastrophic queueing model assuming
the arrival process to be versatile Markovian point process with
phase type service time. All the work discussed till now was studied
under steady-state condition.  Kumar and Arivudainambi
\cite{kumar2000transient} and Kumar and Madheswari
\cite{kumar2002transient} obtained the transient solution of system
size for the $M/M/1$ and $M/M/2$ queueing model with catastrophes,
respectively. Following this, a time dependent solution for the
system size of $M/M/c$ queue with heterogeneous servers and
catastrophes was considered by Dharmaraja and Kumar
\cite{dharmaraja2015transient}. A survey on queueing models with
interruptions due to various reasons such as catastrophes, server
breakdowns, etc. can be found in Krishnamoorthy et al. \cite{krishnamoorthy2014queues}.
\par The papers referred above, studies
queueing models with negative arrivals of one form or the other,
under the assumption of single arrival of positive customers. But in
most of the real-world scenario, the request for service arrives in
groups of random size. For example, transmission of messages to the
service station occurs in the form of packets in batches, unfinished
goods arrives in bulk into the production systems for further
processing. This gives us a practical motivation to relax the
assumption of single arrival and consider batch arrival of the
positive customers into the system. We study a continuous-time
$GI^X/M/1$ queue which is influenced by negative customers (with RCH
killing discipline) and disasters, occurring independently of one
another according to Poisson process. The arrival of negative
customers or disasters have no impact on the system when it is
empty. We first formulate the model using the supplementary variable
technique and then apply difference equation method to obtain the
steady-state distribution of the number of positive customers in the
system at different epochs. In the literature, most of the queueing
models with negative arrivals are studied using the matrix geometric
(matrix analytic) method or the embedded Markov chain technique.
However, encouraged by some recent works (see Barbhuiya and Gupta
\cite{barbhuiya2019difference,barbhuiya2019discrete}, Goswami and
Mund \cite{goswami2011analysis}), we try to implement the
methodology based on supplementary variable technique and difference
equation method to study queueing models with negative arrivals. The
whole procedure involved is analytically tractable and easy to
implement, as we obtain explicit formulae of the system content
distribution at pre-arrival and arbitrary epochs simultaneously, in
terms of roots of the associated characteristic equation and the
corresponding constants. We discuss the stability conditions along
with some special cases of the model. We also present some numerical
results in order to illustrate the applicability of our theoretical
work and study the influence of different parameters on the system
performance.
%
%
%
\par The queueing model described above may have possible use
in computer communications and manufacturing systems (see Artalejo
\cite{artalejo2000g}). A real-world application can be experienced
within a network of computers, where a message affected with virus
often infects the whole system when it gets transferred from one
node to another. A signal which immediately removes the message and
prevents further transmission of it can be thought of as a negative
customer. Moreover, a reset instruction in the computer database may
be considered as a disaster as it clears all the stored files present in the system. In these
systems, the stored files/data act as positive customers whereas
clearing operation plays the role of the negative arrivals (see Wang et al. \cite{wang2011discrete}, Atencia and Moreno
\cite{atencia2004discrete}).

\par The remaining portion of the paper is organized as follows. In
Section \ref{sec2} we give a comprehensive description of the model
under consideration. In Section \ref{sec3} we perform the
steady-state analysis of the model and discuss the stability
condition. We deduce the results of some special cases of our model
in Section \ref{sec4} which is followed by some illustrative
numerical examples in Section \ref{sec5}. Finally, we give the
concluding remarks in Section \ref{sec6}.
\section{Model description}\label{sec2}
\begin{figure}
\begin{center}
\includegraphics[height= 6.8 cm, width = 15 cm]{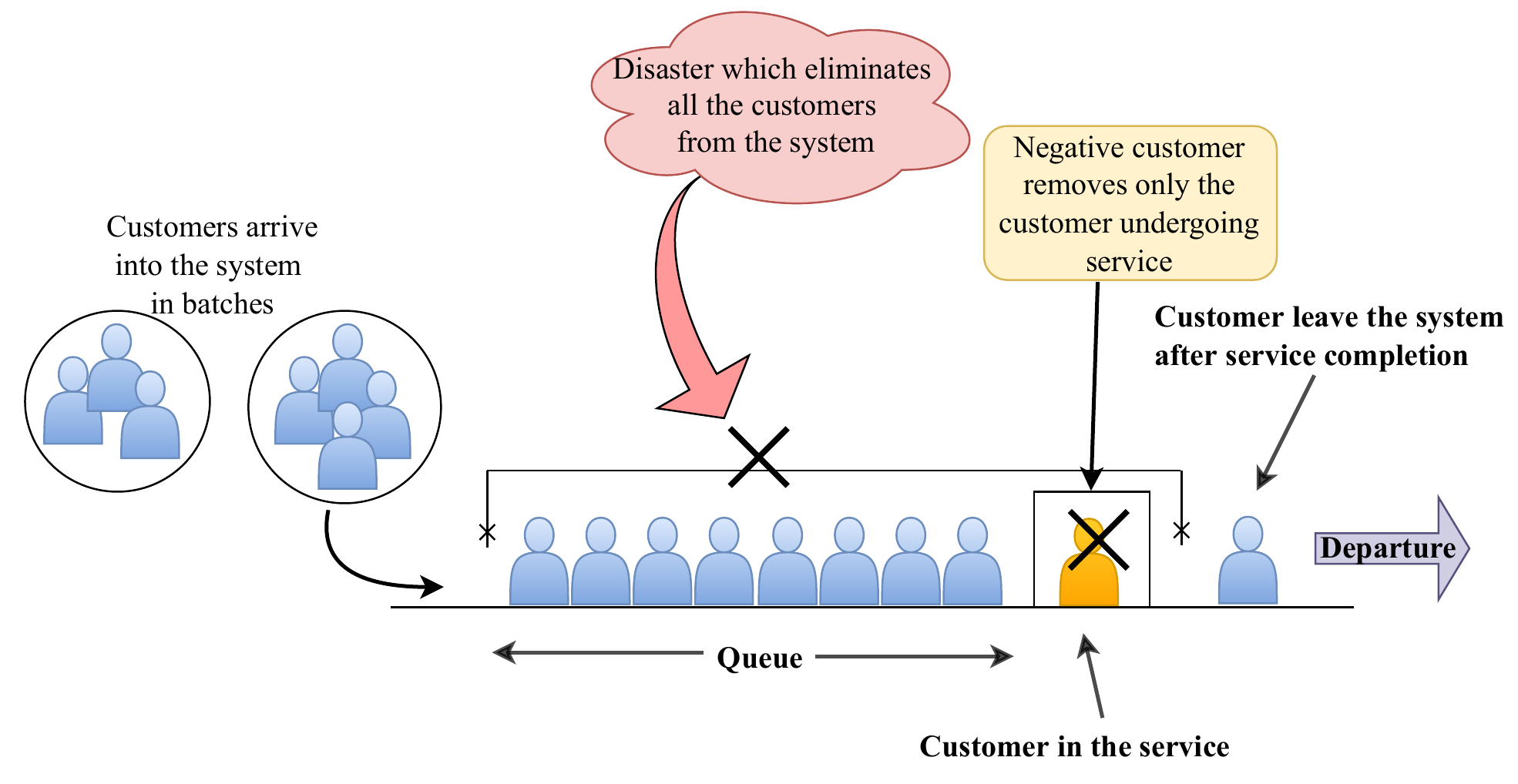}
\caption{Pictorial representation of the $GI^X/M/1$ queue with
negative customer and disaster}\label{fig_model}
\end{center}
\end{figure}
We consider an infinite buffer queueing model wherein customers
(positive customers) arrive into the system in batches and joins the
queue. The arriving batch size is a random variable $X$ with
probability mass function $P(X=i)=g_i$, $i=1,2, \ldots$. For
theoretical analysis and numerical implementation we assume that the
maximum permissible size of the arriving batch is $b$, which also
holds true in many real-world circumstances. Consequently, the mean
arriving batch size is $\overline{g}= \sum_{i=1}^big_i$ and the
probability generating function is $G(z)=\sum_{i=1}^bg_iz^i$. The
inter-arrival times $T$ between the batches are independent and
identically distributed continuous random variables with probability
density function (pdf) $a(t)$, distribution function $A(t)$, the
Laplace-Stieltjes transform (L.S.T) $A^*(s)$ and the mean
inter-arrival time $\lambda^{-1} = a = -A^{*(1)}(0)$, where
$\lambda$ is the arrival rate of the batches and $A^{*(1)}(0)$ is
the derivative of $A^*(s)$ evaluated at $s = 0$. The customers are
served individually by a single server and the service time follows
exponential distribution with parameter $\mu$.
\par The system is affected by negative arrivals which is
characterized by two independent Poisson arrival processes namely,
negative customers and disasters with rate $\eta$ and $\delta$,
respectively. The negative customer follows RCH killing discipline
and removes only the customer undergoing service, while the
occurrence of a disaster eliminates all the customers from the
system. We further assume that the negative customer or disaster
have no impact on the system when it is empty. The arrival process,
service process and the negative arrivals are independent of each
other. The model described above may be mathematically denoted by
$GI^X/M/1$ queue with negative customers and disasters. One may
refer to Figure \ref{fig_model} for a pictorial representation of
the model.

\section{The steady-state analysis} \label{sec3}
In this section we analyze the model described in Section \ref{sec2}
in steady-state. We first formulate the governing equations of the
system using supplementary variable technique (SVT) by considering
the remaining inter-arrival time of the next batch as the
supplementary variable. For this purpose, we denote the states of
the system $N(t)$ and $U(t)$ respectively, as the number of
customers in the system and the remaining inter-arrival time of the
next batch, at time $t$. We further define
$$q_n(u,t)du=P[N(t)=n, u < U(t)\leq u+ du], ~~n \geq 0, u\geq 0,$$
and in steady-state $$p_n(u)=\lim_{t \rightarrow \infty}q_n(u,t).$$
Relating the states of the system at two consecutive epochs $t$ and
$t+\Delta t$ and using the arguments of SVT we obtain the following
difference-differential equations in steady state.
%
\begin{eqnarray}
-\frac{d}{du}p_0(u) &=&  (\mu+\eta)  p_1(u) + \delta \sum_{k=1}^{\infty} p_k(u),  \label{3}\\
-\frac{d}{du}p_n(u) &=&  -(\mu+\eta+\delta) p_n(u)  + a(u)
\sum_{i=1}^{min\{n,b\}} g_i p_{n-i}(0) + (\mu+\eta) p_{n+1}(u), ~~n
\ge 1. \label{4}
\end{eqnarray}
Obtaining the steady-state solution directly from (\ref{3}) and
(\ref{4}) is a rather difficult task. Therefore, for further
analysis we take the transform for which we define $$p_n^*(s) =
\int_{0}^{\infty} e^{-su} p_n(u) du ~\Rightarrow~
p_{n}=p_{n}^{*}(0)=\int_{0}^{\infty}p_{n}(u)du,~ n\geq 0.$$
Multiplying (\ref{3})-(\ref{4}) by $e^{-su}$, integrating with
respect to $u$ over $0$ to $\infty$ and then separating equation
(\ref{4}) we obtain the transformed equations as
\begin{eqnarray}
-  s p^*_0(s) &=& (\mu+\eta) p^*_1(s)  + \delta \sum_{k=1}^{\infty} p_k^*(s) - p_0(0), \label{5} \\
(\mu +\eta+ \delta - s) p^*_n(s) &=& A^*(s) \sum_{i=1}^{n} g_i p_{n-i}(0) + (\mu+\eta) p^*_{n+1}(s) - p_n(0), ~ 1 \le n \leq b-1, \label{6} \\
(\mu +\eta+ \delta - s) p^*_n(s) &=& A^*(s) \sum_{i=1}^{b} g_i
p_{n-i}(0) + (\mu+\eta) p^*_{n+1}(s) - p_n(0), ~~ n \ge b. \label{7}
\end{eqnarray}
Adding (\ref{5})-(\ref{7}) for all values of $n$, taking limit $s
\rightarrow 1$ and using the normalizing condition
$\sum_{n=0}^{\infty}p_n=1$ we have
\begin{eqnarray}
\sum\limits_{n=0}^{\infty} p_n(0)= \frac{1}{a}= \lambda. \label{8}
\end{eqnarray}
The L.H.S of equation (\ref{8}) denotes the mean number of arriving
batch into the system per unit time such that the remaining
inter-arrival time is 0, which is actually the arrival rate
$\lambda$. We now define $p_n^-$ as the probability that the number
of positive customers in the system is $n$ just before the arrival
of a batch, i.e., at pre-arrival epoch. Since $p_n^-$ is
proportional to $p_n(0)$ and $\sum_{n=0}^{\infty}p_n^-=1$, we have
the relation between $p_n^-$ and $p_n(0)$ as
\begin{eqnarray}
p_n^-=\frac{p_n(0)}{\sum_{k=0}^{\infty} p_k(0)}=
\frac{p_n(0)}{\lambda},~~ n \ge 0.\label{9}
\end{eqnarray}
Based on the theory of difference equations we obtain the state
probabilities at pre-arrival ($p_n^-$) and arbitrary ($p_n$) epochs
in the following section.
\subsection{Steady-state system-content
distributions}\label{subsec3.1}
We define the right shift operator $D$ on the sequence of
probabilities $\{p_n(0)\}$ and $\{p_n^*(s)\}$ as $Dp_n(0) =
p_{n+1}(0)$ and $Dp_n^*(s) = p_{n+1}^*(s)$ for all $n$. Thus,
(\ref{7}) can be rewritten in the form
\begin{eqnarray}
\left(\delta - s + (\mu+\eta)(1- D) \right) p^*_n(s) &=&   \left(
A^*(s) \sum_{i=1}^{b} g_i D^{b-i}   -D^b \right) p_{n-b}(0), ~~ n
\ge b. \label{10}
\end{eqnarray}
Substituting $s = \delta + (\mu+\eta) (1-D)$ in (\ref{10}), we get
the following homogeneous difference equation with constant
coefficient:
\begin{eqnarray}
\left[A^*(\delta + (\mu+\eta) (1-D)) \sum_{i=1}^{b} g_i D^{b-i} -D^b
\right] p_{n}(0) = 0, ~~ n \ge 0. \label{11}
\end{eqnarray}
The corresponding characteristic equation (c.e.) is
\begin{eqnarray}
A^*( \delta + (\mu+\eta) (1-z)) \sum_{i=1}^{b} g_i z^{b-i} - z^b =
0, \label{12}
\end{eqnarray}
which has exactly $b$ roots, denoted by $r_1, r_2, . . .,r_b$,
inside the unit circle $|z|=1$. Thus the solution of (\ref{11}) is
of the form
\begin{eqnarray}
p_{n}(0)= \sum_{i=1}^{b}c_i r_i^n, ~~n \ge 0, \label{13}
\end{eqnarray}
where $c_1, c_2, . . ., c_b$ are the corresponding $b$ arbitrary
constants independent of $n$. Now using (\ref{13}) in (\ref{10}) we
have the following non-homogeneous difference equation
\begin{eqnarray}
(\delta -s + (\mu+\eta) (1-D)) p^*_n(s) &=& \sum_{j=1}^{b}c_j \left(
A^*(s) \sum_{i=1}^{b} g_i r_j^{-i} - 1 \right) r_j^{n}, ~~ n \ge b.
\label{14}
\end{eqnarray}
The general solution of (\ref{14}) is of the form
\begin{eqnarray}
p_n^*(s)= B \left( 1+\frac{ \delta - s}{\mu+\eta} \right)^{n} +
\sum_{j=1}^{b}c_j \left\{ \frac{A^*(s) G(r_j^{-1}) - 1}{\delta - s +
(\mu+\eta)(1-r_j)} \right\} r_j^{n}, ~~ n \geq b,\label{15}
\end{eqnarray}
where the first term in the R.H.S of (\ref{15}) is the solution
corresponding to the homogeneous equation of (\ref{14}) for a fixed
$s$, such that $B$ is an arbitrary constant. Meanwhile, the second
term in the R.H.S. is a particular solution of (\ref{14}). Taking
limit as $s \rightarrow 0$ and summing over $n$ from $b$ to $\infty$
in (\ref{15}), we have,
$\sum_{n=b}^{\infty}p_{n}^{*}(0)=\sum_{n=b}^{\infty}p_{n} \leq 1$.
However, $B\sum_{n=b}^{\infty}\left( 1+\frac{ \delta - s}{\mu+\eta}
\right)^{n}$ tends to infinity as $s \rightarrow 0$. Thus to ensure
the convergence of the solution we must have $B=0$ and thus
(\ref{15}) reduces to
\begin{eqnarray}
p_n^*(s)= \sum_{j=1}^{b}c_j \left\{ \frac{A^*(s) G(r_j^{-1}) -
1}{\delta - s + (\mu+\eta)(1-r_j)} \right\} r_j^{n}, ~~ n \geq
b.\label{16}
\end{eqnarray}
We now find the conditions under which $p_n^*(s)$ satisfies
(\ref{16}) for $1 \leq n \leq b-1$ as well. Thus substituting the
respective values in (\ref{6}) we obtain
\begin{eqnarray}
\sum_{j=1}^{b}c_j \left(\sum\limits_{i=n+1}^{b} g_i r_j^{n-i}
\right) =0, ~ 1 \leq n \leq b-1, \nonumber 
\end{eqnarray}
which reduces to the following on using the condition $g_b \neq 0$,
\begin{eqnarray}
\sum_{j=1}^{b}c_j  r_j^{n-b} =0, ~ 1 \leq n \leq b-1. \label{18}
\end{eqnarray}
Summing over $n$ from $0$ to $\infty$ in (\ref{13}) and using
relation (\ref{8}) we obtain
\begin{eqnarray}
\lambda = \sum_{i=1}^{b}\frac{c_i}{1-r_{i}}. \label{19}
\end{eqnarray}
One may note that (\ref{18}) and (\ref{19}) together constitutes a
system of $b$ equations in $b$ unknowns which can be solved to
obtain the constants $c_j$ for $j=1,2,\ldots,b$. Once $c_j$'s are
obtained, the expression of $p_n(0)$ given in (\ref{13}) becomes
completely known and $p_n^*(s)$ is given by
\begin{eqnarray}
p_n^*(s)= \sum_{j=1}^{b}c_j \left\{ \frac{A^*(s) G(r_j^{-1}) -
1}{\delta - s + (\mu+\eta)(1-r_j)} \right\} r_j^{n}, ~~ n \geq
1.\label{20}
\end{eqnarray}
Now, using (\ref{9}) and (\ref{20}), the steady-state distribution
of the number of positive customers in the system at pre-arrival and
arbitrary epochs are given by
\begin{eqnarray}
p_n^-&=&\frac{1}{\lambda}\sum\limits_{i=1}^{b} c_i r_i^n, ~~n \geq 0, \label{21}\\
p_n&=&p_n^*(0)=\sum_{j=1}^{b}c_j   \left\{ \frac{G(r_j^{-1}) - 1 }{\delta + (\mu+\eta)(1-r_j)} \right\} r_j^{n}, ~ n \geq 1, \label{22}\\
p_0&=& 1- \sum_{n=1}^{\infty}p_n= 1- \sum_{j=1}^{b}  \frac{c_j
r_j}{1-r_j} \left\{ \frac{ G(r_j^{-1})- 1 }{\delta +
(\mu+\eta)(1-r_j)} \right\}. \label{23}
\end{eqnarray}
This completes the analysis of the model under consideration. It may
be noted that the results derived so far are mainly expressed in
terms of the roots of the c.e. (\ref{12}) lying inside the unit
circle. It can be proved that $\delta >0$ is a sufficient condition
for the c.e. to have exactly $b$ roots inside the unit circle (see
Appendix), which ensures the stability of the system. Or in other
words, due to the occurrence of disasters the system becomes empty
and as a result the model under consideration always remains stable.
\par Once the probability distributions are completely known,
different characteristic measures determining the performance of the
system can be easily established. For example, the average
population size at pre-arrival ($L^-$) and arbitrary ($L$) epochs
are given by $L^{-}=\sum_{n=1}^{\infty}np_n^-$ and
$L=\sum_{n=1}^{\infty}np_n$, respectively. That is,
$$L^{-}= \frac{1}{\lambda}\sum\limits_{i=1}^{b}  \frac{c_i
r_i}{(1-r_i)^2}, ~~L = \sum_{j=1}^{b} \frac{c_j r_j}{(1-r_j)^2}
\left\{ \frac{G(r_j^{-1}) - 1}{\delta + (\mu+\eta)(1-r_j)}
\right\}.$$
\section{Special cases}\label{sec4}
In this section we discuss a few special cases of the model by
considering some fixed values of the parameters. As a result our
model reduces to some well-known classical queueing models with or
without negative arrivals.
\begin{description}
\item[Case 1:] If $\eta =0$ and $\delta =0$, i.e., negative customer
or disaster does not occur or their occurrence have no impact on the
system, then our model reduces to the classical $GI^X/M/1$ queue.
Consequently, the steady-state distributions of the number of
customers in the system at pre-arrival and arbitrary epochs can be
obtained directly from (\ref{21})-(\ref{23}) by putting $\eta =0$
and $\delta =0$, where $r_j$, $j=1,2,\ldots , b$ are the roots of
the c.e. $z^b-A^*(\mu - \mu z)\sum_{i=1}^bg_iz^{b-i}=0$ lying inside
the unit circle, and then the corresponding arbitrary constants
$c_j$, $j=1,2,\ldots , b$ can be obtained by solving the system of
equations (\ref{18}) and (\ref{19}). Here it may be noted that
$\lambda \overline{g}<\mu$ is the necessary and sufficient condition
for the stability of the system. This particular queueing model has
been extensively studied in the literature, both analytically and
numerically, based on the use of embedded Markov chain technique and
roots method (see Chaudhry and Templeton \cite{chaudhry1983first},
Bri$\grave{e}$re and Chaudhry \cite{briere1987computational}, Easton
et al. \cite{easton1984some,easton1984some1}). However, the present
paper provides an alternative procedure for the solution of the
model which is theoretically tractable and easy to implement, as
compared to the other approaches.
\par Meanwhile, setting $\eta =0$, $\delta =0$, $g_1=1$ and $g_i=0$ for
$i \geq 2$ will give the steady-state solution for $GI/M/1$ queue.
The c.e. will have a single root inside the unit circle (say $r$)
under the condition $\lambda< \mu$, and the corresponding arbitrary
constant can be obtained from (\ref{19}) as $c=\lambda (1-r)$. It is
followed by the system-content distributions which can be obtained
from (\ref{21})-(\ref{23}).
\item[Case 2:] If $\delta =0$, i.e., the disaster does not play any
role and the only negative arrivals are the negative customers, then
the model reduces to $GI^X/M/1$ queue with negative customers. The
c.e. $z^b-A^*((\mu + \eta) - (\mu +\eta) z)\sum_{i=1}^bg_iz^{b-i}=0$
will have exactly $b$ roots inside the unit circle under the
necessary and sufficient condition $\lambda \overline{g}<\mu +
\eta$. Equations (\ref{18}) and (\ref{19}) can be solved for the
arbitrary constants following which, the steady-state distributions
of the number of positive customers in the system can be obtained
from (\ref{21})-(\ref{23}). As discussed in Case 1, the solution for
$GI/M/1$ queue with negative customers (Yang and Chae
\cite{yang2001note}) can be further derived by assuming $g_1=1$ and
$g_i=0$ for $i \geq 2$.
\item[Case 3:] If $\eta=0$, the system does not get affected by the negative
customers and our model reduces to $GI^X/M/1$ queue with disaster.
Due to the impact of disasters, the system will always remain stable
and hence the c.e. will have exactly $b$ roots inside the unit
circle under the sufficient condition $\delta
>0$. The steady-state distributions can be derived from (\ref{21})-(\ref{23}) after
obtaining the constants from (\ref{18}) and (\ref{19}). Similarly as
before, the solution for $GI/M/1$ queue with disasters (Park et al.
\cite{park2009analysis}) can also be obtained.
\end{description}
\section{Numerical Observation}\label{sec5}
\begin{table}[h!]
\centering \caption{Steady-state distribution of the number of
positive customers in the system at various epochs for different
inter-arrival time distributions} \label{table1} \footnotesize
\begin{tabular}{|c|ccc|ccc|}
\hline ~ & \multicolumn{3}{c}{$GI=M$} & \multicolumn{3}{c|}{$GI=D$}\\
\hline $n$ & $p_n^-$   & $p_n$   &   $p_{n+1}^-/p_n^-$   &   $p_n^-$
& $p_n$     & $p_{n+1}^-/p_n^-$ \\ \hline
0   &   0.20533567  &   0.20533567  &   0.15065676  & 0.23080160  &   0.12004016  &   0.15318913  \\
1   &   0.03093521  &   0.03093521  &   0.91498603  & 0.03535630  &   0.03653976  &   1.12629474  \\
2   &   0.02830528  &   0.02830528  &   1.65427828  & 0.03982161  &   0.03420904  &   1.01859822  \\
3   &   0.04682481  &   0.04682481  &   0.55001705  & 0.04056222  &   0.06060381  &   0.85997718  \\
4   &   0.02575445  &   0.02575445  &   1.23727398  & 0.03488259  &   0.02886916  &   1.20959058  \\
5   &   0.03186531  &   0.03186531  &   1.45536181  & 0.04219365  &   0.04113680  &   0.92958177  \\
6   &   0.04637555  &   0.04637555  &   0.55360053  & 0.03922244  &   0.05948070  &   0.75644328  \\
7   &   0.02567353  &   0.02567353  &   1.05065616  & 0.02966955  &   0.03095702  &   1.07090478  \\
$\vdots$ & $\vdots$ & $\vdots$ &$\vdots$ &$\vdots$ &$\vdots$ & $\vdots$  \\
200 &   0.00000060  &   0.00000060  &   0.94509121  & 0.00000009  &   0.00000010  &   0.93533903  \\
201 &   0.00000057  &   0.00000057  &   0.94509121  & 0.00000008  &   0.00000009  &   0.93533903  \\
202 &   0.00000054  &   0.00000054  &   0.94509121  & 0.00000008  &   0.00000009  &   0.93533903  \\
203 &   0.00000051  &   0.00000051  &   0.94509121  & 0.00000007  &   0.00000008  &   0.93533903  \\
204 &   0.00000048  &   0.00000048  &   0.94509121  & 0.00000007  &   0.00000008  &   0.93533903  \\
205 &   0.00000045  &   0.00000045  &   0.94509121  & 0.00000006  &   0.00000007  &   0.93533903  \\
$\vdots$ & $\vdots$ & $\vdots$ &$\vdots$ &$\vdots$ &$\vdots$ &
$\vdots$  \\ \hline sum &   1.00000000  &   1.00000000  &~ &
1.00000000  &   1.00000000  & \\ \hline
Mean    &   15.04001756 &   15.04001756 & ~ &   12.39890533 &   14.40030123 & ~  \\
\hline
\end{tabular}
\end{table}
%
%
\begin{figure}[htbp]
  \begin{minipage}[b]{0.5\linewidth}
    \centering
    \includegraphics[width=\linewidth]{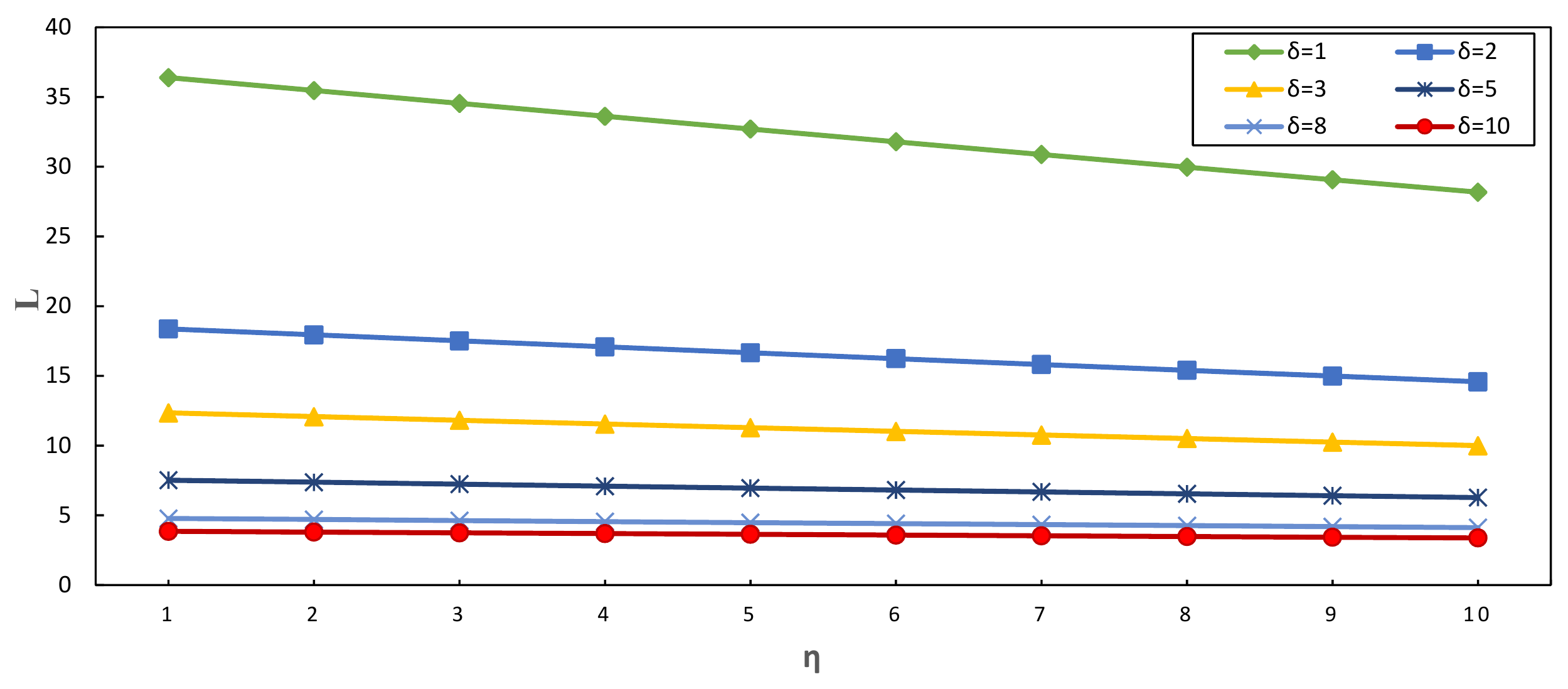}
    \caption{Effect of $\eta$ on $L$ for various $\delta$} \label{fig1}
    \end{minipage}
  \begin{minipage}[b]{0.5\linewidth}
    \centering
    \includegraphics[width=\linewidth]{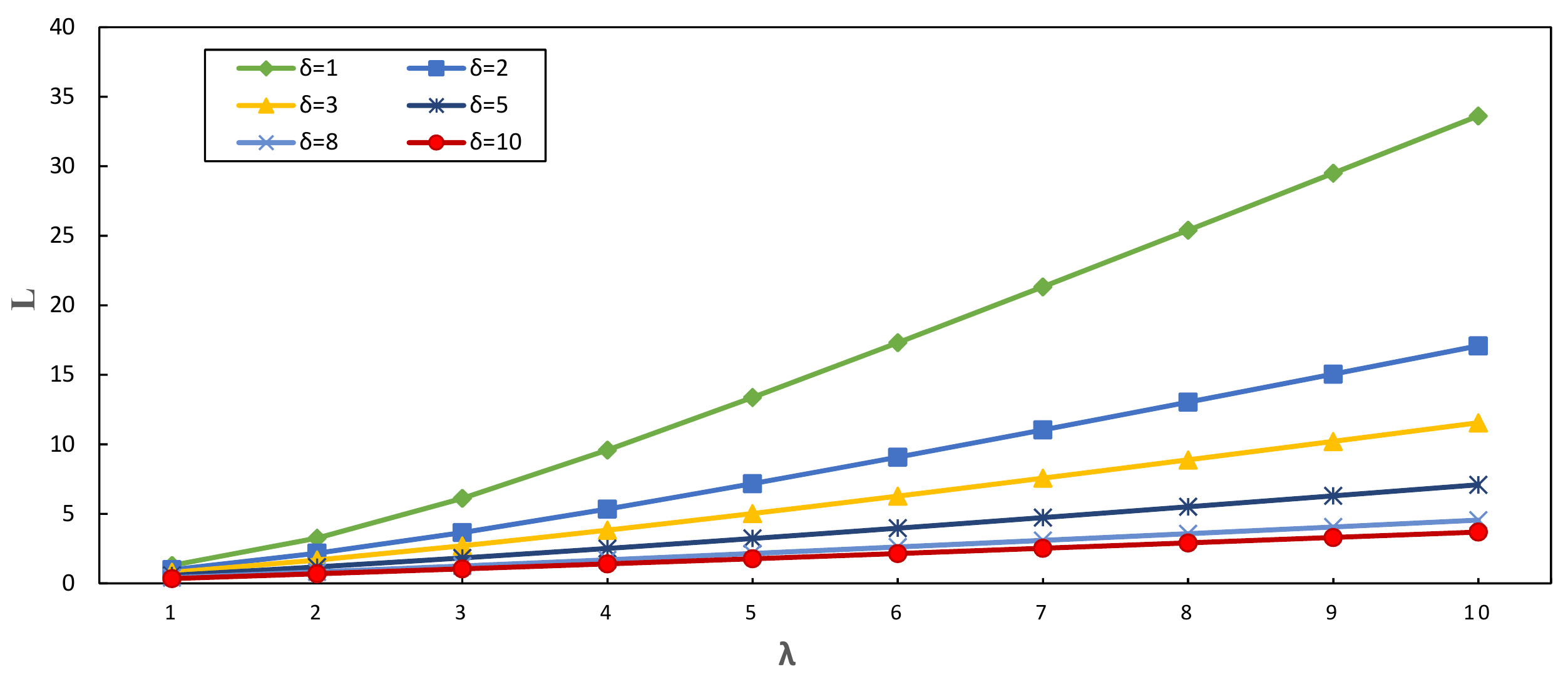}
    \caption{Effect of $\lambda$ on $L$ for various $\delta$} \label{fig2}
     \end{minipage}
\end{figure}
\begin{figure}[htbp]
  \begin{minipage}[b]{0.5\linewidth}
    \centering
    \includegraphics[width=\linewidth]{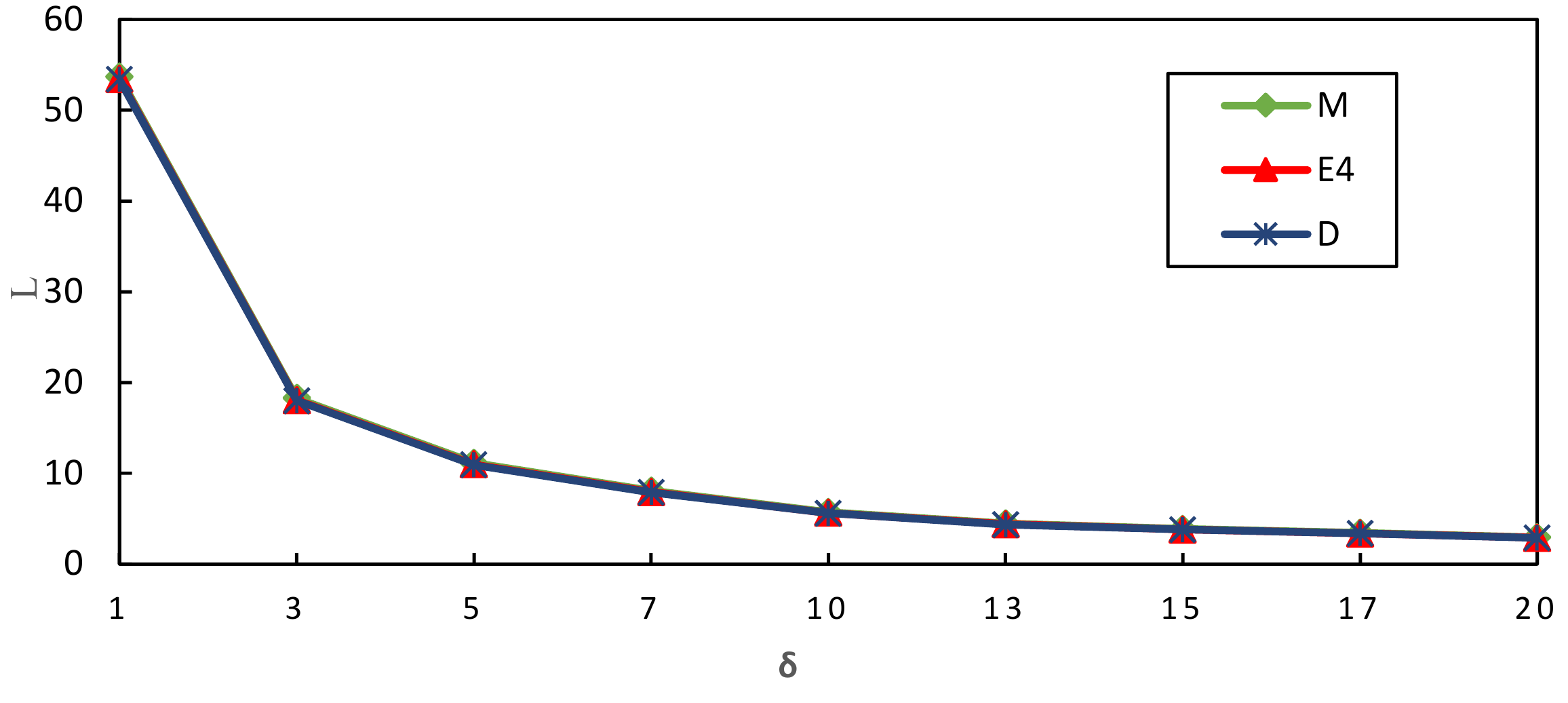}
    \caption{Effect of $\delta$ on $L$ for various interarrival distributions} \label{fig3}
    \end{minipage}
  \hspace{0.15cm}
  \begin{minipage}[b]{0.5\linewidth}
    \centering
    \includegraphics[width=\linewidth]{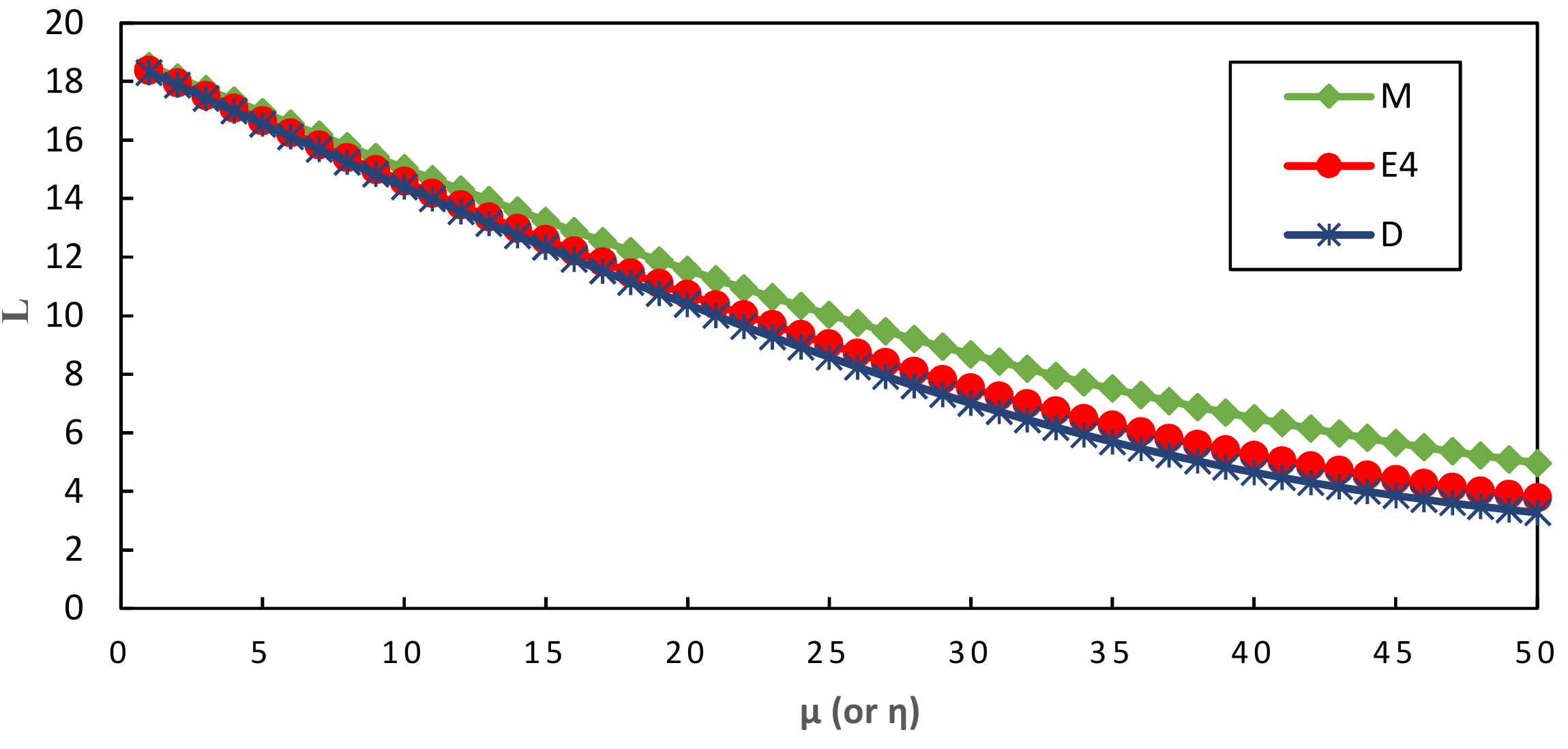}
    \caption{Effect of $\eta$ on $L$ for various interarrival distributions} \label{fig4}
     \end{minipage}
\end{figure}
In this section we demonstrate the analytical results obtained in
Section \ref{sec3} by some numerical examples, which are represented
in tabular and graphical form. The results given in the table may be
beneficial for other researchers who would like to compare their
results using some other methods in the near future.
\par Table \ref{table1} displays the steady-state distribution of
the number of positive customers in the system for Poisson ($M$) and
deterministic ($D$) arrival processes. The parameters chosen are
$\lambda=10$, $\mu=10$, $\eta=5$, $\delta=2$, $g_1=0.2$, $g_3=0.4$,
$g_6=0.3$ and $g_{10}=0.1$. The last row of the table depicts the
average system content at various epochs. It is important to note
that the system-content distributions in the $2^{nd}$ and $3^{rd}$
column are same due to the Poisson arrival process, which verifies
the accuracy of our analytical results. Meanwhile, for deterministic
inter-arrival time distribution, the L.S.T $A^*(s)$ is a
transcendental function, which is approximated to a rational
function using $Pad\acute{e}(15,15)$ approximation (see Akar and
Arikan \cite{AkarArikan}, Singh \textit{et al.} \cite{gagan}).
Another interesting trend can be observed in the $4^{th}$ and
$7^{th}$ column of the table. As $n$ becomes larger, the ratio of
the system content distribution at pre-arrival epoch converges to a
particular value which is the largest real root (say $r_b$) of the
c.e. (\ref{12}) lying inside the unit circle. This suggests that the
limiting distributions at the pre-arrival epoch can be approximated
by the unique largest root of the c.e. as
$p_n^-=\frac{1}{\lambda}c_br_b^n$.
\par Figure \ref{fig1} investigates the influence of $\eta$ on $L$
for different values of $\delta$. As $\eta$ increase, $L$ decreases
for any value of $\delta$, which is intuitive. Similarly, for a
fixed $\eta$, $L$ decreases with increasing $\delta$. However, as
$\delta$ becomes too large ($\delta =10$), $L$ seems to attain a
constant value irrespective of the values of $\eta$. A similar
behavior can be experienced on plotting $L$ against $\mu$ for
different values of $\delta$, and consequently it is omitted. Figure
\ref{fig2} depicts the impact of $\lambda$ on $L$ for different
$\delta$. Clearly, as $\lambda$ increases $L$ increases for any
$\delta$. However, when $\lambda$ is kept fixed along with other
parameters, $L$ decreases significantly with the increase in
$\delta$.
\par Finally, in Figure \ref{fig3} and \ref{fig4} we respectively
illustrate the impact of $\delta$ and $\eta$ on $L$ for different
inter-arrival time distributions, namely, exponential ($M$), Erlang
($E_4$) and deterministic ($D$). It may be observed in Figure
\ref{fig3} that for each inter-arrival time distribution, $L$
decreases as $\delta$ increases, which is obvious. However, for a
fixed $\delta$, $L$ is equal for all the three distributions. A
possible explanation for this phenomenon may be the frequent
occurrence of disasters which removes all the customers including
the batch which has just arrived. The effect of inter-arrival time
distribution can be best understood from Figure \ref{fig4} as $\eta$
increases. For higher values of $\eta$, $L$ decreases significantly.
However, for exponential inter-arrival time distribution $L$ is
greater, and decreases for Erlang followed by deterministic
distribution. It may be mentioned that in all the numerical results
generated throughout this section, the values of the parameters
involved are not restricted to any condition except that $\delta
>0$, as the system with disaster is always stable.
\section{Concluding remarks}\label{sec6}
In this paper, the steady-state analysis of a $GI^X/M/1$ queueing
model with negative customers and disasters has been presented. We
have derived the explicit closed-form expressions of the
distribution of the number of positive customers in the system at
pre-arrival and arbitrary epochs, in terms of roots of the
associated characteristic equation. The results of some classical
queueing models with or without negative arrivals have been
discussed along with their stability conditions. Additionally,
through some numerical examples, we have investigated the influence
of negative customers and disasters on the performance
characteristic of the system. The methodology used in this paper is
based on supplementary variable technique and difference equation
method which makes the analysis easily tractable, both theoretically
and computationally. The procedure developed throughout the analysis
can be utilized and further extended to study some more complicated
models.
\section{Appendix}
\begin{theorem}
The c.e. $A^*( \delta + (\mu+\eta) (1-z)) \sum_{i=1}^{b} g_i z^{b-i}
- z^b = 0$ have exactly $b$ roots inside the unit circle $|z|=1$
subject to the condition $\delta >0$.
\end{theorem}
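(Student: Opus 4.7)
The natural tool here is Rouché's theorem applied on the unit circle $|z|=1$. I would set
$$f(z)=z^b, \qquad h(z)=A^*\!\bigl(\delta+(\mu+\eta)(1-z)\bigr)\sum_{i=1}^{b}g_i z^{b-i},$$
and prove that $|h(z)|<|f(z)|$ on $|z|=1$. Because $\delta>0$, the argument $\delta+(\mu+\eta)(1-z)$ of $A^*$ has real part bounded below by $\delta$ on the closed unit disk, so $h$ is analytic in a neighbourhood of $|z|\le 1$ and $f-h$ defines a genuine analytic function there; Rouché will then give that $f$ and $f-h$ have the same number of zeros inside $|z|<1$, namely $b$ (the $b$-fold zero of $z^b$ at the origin), and that $f-h$ has no zeros on $|z|=1$.

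The heart of the argument is therefore the boundary estimate. I would proceed in two small steps. First, for any complex $s$ with $\mathrm{Re}(s)\ge 0$, the elementary inequality
$$\bigl|A^*(s)\bigr|=\Bigl|\int_0^{\infty}e^{-st}a(t)\,dt\Bigr|\le \int_0^{\infty}e^{-\mathrm{Re}(s)\,t}a(t)\,dt=A^*(\mathrm{Re}(s))$$
holds, and since $A^*$ is strictly decreasing on $[0,\infty)$, for $|z|\le 1$ (so that $1-\mathrm{Re}(z)\ge 0$) one gets
$$\bigl|A^*\!\bigl(\delta+(\mu+\eta)(1-z)\bigr)\bigr|\le A^*\!\bigl(\delta+(\mu+\eta)(1-\mathrm{Re}(z))\bigr)\le A^*(\delta).$$
Second, on $|z|=1$, $\sum_{i=1}^{b}g_i|z|^{b-i}=\sum_{i=1}^{b}g_i\le 1$. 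Combining,
$$|h(z)|\le A^*(\delta)\cdot 1<1=|f(z)|\qquad\text{on }|z|=1,$$
where the strict inequality $A^*(\delta)<1$ uses $\delta>0$ together with the fact that the inter-arrival distribution is not degenerate at $0$ (so that $A^*(\delta)=E[e^{-\delta T}]<1$). Rouché then closes the argument: $f-h$, which is exactly the characteristic function in the statement, has exactly $b$ zeros inside $|z|<1$ and none on $|z|=1$.

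I do not expect any serious obstacle; the only mildly delicate point is justifying the strict inequality $A^*(\delta)<1$, which I would handle by a one-line remark that $A$ assigns no mass to $\{0\}$ (an implicit standing assumption of the model, since a batch arrival process needs $P(T>0)=1$). All other ingredients — analyticity of $A^*$ in the relevant half-plane, the Laplace-transform modulus bound, monotonicity of $A^*$ on the positive real line, and the fact that $\sum g_i=1$ — are standard and require no computation beyond what is written above.
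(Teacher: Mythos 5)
Your proposal is correct, and it uses the same basic tool as the paper — Rouché's theorem with the same splitting of the characteristic function into $z^b$ and the term $A^*(\delta+(\mu+\eta)(1-z))\sum_{i=1}^b g_i z^{b-i}$ — but the boundary estimate is carried out differently, and your version is the cleaner of the two. The paper works on the shrunken circle $|z|=1-\epsilon$: it observes that the product $K(z)=A^*(\delta+(\mu+\eta)(1-z))\sum_i g_i z^{b-i}$ has a power-series expansion with nonnegative coefficients (so that $|K(z)|\le K(|z|)$), expands $K(1-\epsilon)$ and $(1-\epsilon)^b$ to first order in $\epsilon$, and compares the two; the decisive fact is still $A^*(\delta)<1$, which makes the zeroth-order terms differ and renders the first-order bookkeeping essentially superfluous. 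You instead work directly on $|z|=1$, replacing the nonnegative-coefficient device by the standard Laplace-transform modulus bound $|A^*(s)|\le A^*(\mathrm{Re}\,s)$ together with monotonicity of $A^*$ on the positive half-line, arriving at the same strict inequality $|h(z)|\le A^*(\delta)<1=|f(z)|$. What your route buys is twofold: it avoids the $\epsilon$-expansion entirely (which is really only needed in the boundary case $\delta=0$, where the gap closes and one must invoke the traffic-intensity condition), and it yields the additional conclusion that no root lies on $|z|=1$ itself, which the paper's contour does not directly give. Your attention to the strictness of $A^*(\delta)<1$ (no atom of $T$ at zero) is the right delicate point to flag; the paper glosses over it.
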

\begin{proof}
Let us assume $f_1(z)=-z^b$ and $f_2(z)=A^*( \delta + (\mu+\eta)
(1-z)) \sum_{i=1}^{b} g_i z^{b-i}= K(z)$. Since $A^*( \delta +
(\mu+\eta) (1-z)) \sum_{i=1}^{b} g_i z^{b-i}$ is an analytic
function, it can be written in the form $K(z)=
\sum_{i=1}^{\infty}k_iz^i$ such that $k_i \geq 0$ for all $i$.
Consider the circle $|z|=1- \epsilon$ where $\epsilon
>0$ and is a sufficiently small quantity. Now
\begin{eqnarray}
|f_1(z)|&=&|z^b|=(1-\epsilon)^b=1-b\epsilon +o(\epsilon) \nonumber \\
|f_2(z)|&=& |K(z)||\sum_{i=1}^{b} g_i z^{b-i}|\leq
K(|z|)\sum_{i=1}^{b} g_i|z|^{b-i}
=K(1-\epsilon)\sum_{i=1}^bg_i(1-\epsilon)^{b-i} \nonumber \\ 
&=& A^*( \delta ) - \epsilon \{2A^*( \delta ) (b-\overline{g}) -
(\mu+\eta) A^{*(1)}( \delta ) \} + o(\epsilon) \nonumber \\ &<& 1-
b\epsilon +o(\epsilon) \nonumber
\end{eqnarray}
under the sufficient condition $\delta >0$. Thus from
Rouch$\acute{e}$'s theorem we have exactly the same number of zeroes
in $f_1(z)$ and $f_1(z)+f_2(z)$ inside the unit circle, and hence
the theorem.
\end{proof}


\end{document}